\documentclass[11pt]{amsart}
\usepackage[utf8]{inputenc}
\usepackage{amsmath,amssymb,amsthm}
\usepackage{geometry}
\newtheorem{theorem}{THEOREM}
\newtheorem{lemma}{LEMMA}

\title{A Criterion for classifying Elliptic curves that are unsolvable in the set integers $\mathbb{Z}^+$}
\author{Shazali Abdalla Fadul}
\date{}

\begin{document}
\maketitle

\begin{abstract}
In this work, we establish a criterion for classifying elliptic curves that do not have solutions in the set of positive integers. In particular, we prove that: Let
\[
E: y^2 = Ax^3 + Bx^2 + Cx + D
\]
be an elliptic curve where $A,B,C,D,k \in \mathbb{Z}$ are the coefficients of the curve $E$, and $p$ is a prime number where $p \equiv 1 \pmod 4$ and $E(\mathbb{Z}^+)$ has positive integer points, then if
\[
-3A + C \equiv -1 \pmod 4
\]
\[
A - B + C \equiv -1 \pmod 4
\]
\[
p^2 = A - B + C - D
\]
then there are no positive solutions for the curve $E$, where $E(\mathbb{Z}^+) = \emptyset$ if $x>1, y \ge 1$.

Furthermore, we generalize this criterion by incorporating techniques from linear algebra. Specifically, we introduce a matrix construction to establish the nonexistence of positive integer solutions for a family of elliptic curves.

\textbf{KEY WORDS:} Elliptic curves, linear system, matrix construction.
\end{abstract}

\section{Introduction}

Elliptic curves described by the equation $E: y^2 = x^3 + Ax + B$ are one of the most important topics of study in algebraic and arithmetic geometry. Mathematicians study the solutions of the elliptic curve in the set of integers and rational numbers $\mathbb{Z}, \mathbb{Q}$. They play a crucial role in proving new mathematical results and theorems. For example, mathematician Andrew Wiles \cite{ref14} used elliptic curves to prove Fermat's Last Theorem. Many mathematicians have studied elliptic curves, including Euler, who used algebraic methods to prove the non-existence of solutions to some elliptic curves, such as proving the non-existence of integer points on the curve $y^2 = x^3+1$, other than $y=3$ and $x=2$. There are many examples of other mathematicians. Special cases of solutions to the elliptic curve $E: y^2 = (x+a)(x^3-ax+b)$ in the set of integers have been studied by Guo Jiang, Zhao Jian, see \cite{ref25,ref26,ref18} and others.

In 2010, the elliptic curve was introduced.
\[
y^2 = x^3 - 27x - 62
\]
was studied by Wu Huaming, see \cite{ref25}, who used a method based on elementary number theory. All trend points have been reached.

Also, using the same ideas, the generalized case at $b=43$ and $a=2$ by Du Xian-cun, Zhao Jian-hong \cite{ref26} was studied. Also in 2017 the points of the elliptic curve $y^2=(x+6)(x^2-6x+19)$ were studied, see \cite{ref18}, studied by Guo Jing, arriving at the final conclusion that there are no solutions in the set of integers for that elliptic curve. In addition, there are other scholars who studied the properties and characteristics of solutions of elliptic curves, including Louis Mordell \cite{ref15}, who proved that the set of points on the curve $E$ forms a finitely generated Abelian group, see \cite{ref16} for further discussion. Mazur also proved (Mazur's theorem, see \cite{ref6}), in 1977, that the torsion subgroup of the group of rational points $E(\mathbb{Q})$ on an elliptic curve must be one of 15 groups. In particular $E(\mathbb{Q})_{\mathrm{tors}}$ has order at most 16.

There are also others, namely Siegel \cite{ref11}, Nigel-Lutz \cite{ref9}, and Henry Bencarrie \cite{ref10}, Faltings \cite{ref16}, all of whom studied and developed important results regarding the properties and characteristics of elliptic curve solutions. The results of Siegel and others have provided us with many algebraic properties of elliptic curve solutions. They have answered many questions, but some questions remain unanswered: how can we prove the existence or non-existence of solutions for an elliptic curve $E$? This question has no general answer. Therefore, Du and Wu Huaming studied special cases of curves to prove the existence or non-existence of solutions. In general, all the results mentioned do not provide us with a method to prove the existence or non-existence of solutions for elliptic curves in general.

The main objective of this work is to classify elliptic curves that admit no solutions in the set of positive integers. To this end, we establish a criterion for the classification of such elliptic curves. For example, we prove that: Let
\[
E: y^2 = Ax^3 + Bx^2 + Cx + D
\]
be an algebraic curve where the coefficients of the curve are $A,B,C,D,k \in \mathbb{Z}$, $k$ is an odd number, all prime divisors of $k$ are of the form $p = 4n+1$, and $E(\mathbb{Z}^+)$ has positive integer points, then if
\[
-3A+C \equiv -1 \pmod 4, \qquad A-B+C \equiv -1 \pmod 4, \qquad k^2 = A-B+C-D
\]
holds, then there are no positive solutions for the curve $E$, i.e. $E(\mathbb{Z}^+) = \emptyset$.

We further generalize this criterion by incorporating tools from linear algebra into the proposed framework. In particular, we prove that: Let $S: Ax=b$ be a system of linear equations, where $A \in M_{4\times 6}(\mathbb{Z})$. Suppose that $S=\{s_1,s_2,\dots,s_N\}\subset \mathbb{Z}^4$ is the set of solutions of the linear system, and let $k,a\ge 1$ be positive odd integers such that every prime divisor $p$ of $k,a$ satisfies $p\equiv 1 \pmod 4$, with $u,t\in\mathbb{Z}$. If the system $S$ is given by
\[
\begin{pmatrix}
a_{11} & -a_{12} & a_{13} & -a_{14} \\
a_{21} & -a_{22} & a_{23} & 0 \\
-3a_{31} & 0 & a_{33} & 0 \\
a_{41} & -a_{42} & a_{43} & -a_{44} \\
a_{51} & -a_{52} & a_{53} & 0 \\
-3a_{61} & 0 & a_{63} & 0
\end{pmatrix}
\begin{pmatrix} c_1 \\ c_2 \\ c_3 \\ c_4 \end{pmatrix}
=
\begin{pmatrix} k^2 \\ 4u-1 \\ 4t-1 \\ a^2 \\ 4s-1 \\ 4r-1 \end{pmatrix}
\]
and the entries of the matrix satisfy $a_{11}=a_{21}=a_{31}$, $a_{12}=a_{22}$, $a_{13}=a_{23}=a_{33}$, $a_{41}=a_{51}=a_{61}$, $a_{42}=a_{52}$, and $a_{43}=a_{53}=a_{63}$, then there exist elliptic curves
\[
E_{s_i}: Y^2 = a_{11}c_1X^3+a_{12}c_2X^2+a_{13}c_3X+a_{14}c_4,
\]
\[
C_{s_i}: Y^2 = a_{41}c_1X^3+a_{42}c_2X^2+a_{43}c_3X+a_{44}c_4.
\]
If $x>1,y\ge1$ where $\forall(x,y)\in E_{s_i}(\mathbb{Z}^+), C_{s_i}(\mathbb{Z}^+)$ holds, then
\[
\{E_{s_i}(\mathbb{Z}^+)\}_{i=1}^N = \emptyset, \qquad \{C_{s_i}(\mathbb{Z}^+)\}_{i=1}^N = \emptyset.
\]

The paper is divided as follows: in the second section, some fundamental theory regarding prime numbers and their properties is established, as well as THEOREM 3. In the third section, we prove THEOREM 4, an important result concerning elliptic curves. The theorem describes the relationship of elliptic curves over the set of integers with a given algebraic equation. We also prove THEOREM 5, a strong and fundamental theorem in this article, giving a criterion for classifying elliptic curves that are not solvable in the set of positive integers. In the fourth section, we study special cases of elliptic curves using THEOREM 5. In Section 5, we generalize the criterion using linear-algebraic techniques through a matrix construction. As a consequence, we establish THEOREMS 9, 10, 11, and 12.

\section{The Method of Proof}

In this section, we establish the fundamental results on which the proof of the main results in this study relies. First, we prove THEOREM 1, a result concerning the properties of prime numbers of the form $4n-1$. We also mention Fermat's theorem on the sum of two squares. We use these results in proving THEOREM 3.

\begin{theorem}\label{thm1}
Let $n=4m-1$ and $m>0\in\mathbb{N}$. Then we find $p$ prime where $p\mid n$ and $p\equiv -1\pmod 4$, $p\le n$.
\end{theorem}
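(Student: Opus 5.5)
We use the classical argument that every positive integer congruent to $-1 \pmod 4$ has a prime divisor congruent to $-1 \pmod 4$. Let $n = 4m-1$ with $m \ge 1$, so $n \ge 3$ and $n$ is odd. Every prime divisor $p$ of $n$ is therefore odd, so $p \equiv 1 \pmod 4$ or $p \equiv -1 \pmod 4$. Any divisor of the positive integer $n$ satisfies $p \le n$, so the bound in the statement is automatic once $p \mid n$ is established. The real content is to show that the residue $-1$ actually occurs.

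\textbf{Step 1.} Write the prime factorization $n = p_1^{e_1}\cdots p_r^{e_r}$, which exists since $n \ge 3 > 1$.

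\textbf{Step 2.} Suppose, for contradiction, that every $p_i \equiv 1 \pmod 4$.

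\textbf{Step 3.} The residues congruent to $1$ modulo $4$ are closed under multiplication, because $(4a+1)(4b+1) = 4(4ab+a+b)+1$. By induction on the total number of prime factors $e_1+\cdots+e_r$, this gives $n \equiv 1 \pmod 4$.

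\textbf{Step 4.} This contradicts $n \equiv -1 \pmod 4$, since $1 \not\equiv -1 \pmod 4$. Hence some prime $p = p_i$ divides $n$ with $p \equiv -1 \pmod 4$, and $p \le n$ as noted above.

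An equivalent route is strong induction on $n$. If $n$ is prime, take $p = n$. Otherwise write $n = ab$ with $1 < a, b < n$. Both factors are odd, and since $ab \equiv -1 \pmod 4$, they cannot both be $\equiv 1$. So one of them, say $a$, is $\equiv -1 \pmod 4$, and the induction hypothesis applied to $a$ gives a prime $p \mid a \mid n$ with $p \equiv -1 \pmod 4$.

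There is no serious obstacle. The only point requiring care is to exclude the prime $2$ (handled by the oddness of $n$) and to note that $m > 0$ ensures $n > 1$, so a prime factorization exists.
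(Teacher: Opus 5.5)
Your proof is correct and follows the same route as the paper: factor $n$, observe that a product of primes all $\equiv 1 \pmod 4$ is itself $\equiv 1 \pmod 4$, and derive a contradiction with $n \equiv -1 \pmod 4$. Your version is more careful than the paper's, since you explicitly rule out the prime $2$ using the oddness of $n$ and you justify that the residue class $1 \bmod 4$ is closed under multiplication.
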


\begin{proof}
Let $n=4m-1\in\mathbb{N}$, so according to the fundamental theorem of arithmetic (see \cite[Chp.\,1, p.\,83]{ref24}, Theorems 3,4), we have $n = p_1^{e}\cdot p_2\cdots p_n$. We conclude from this that if $p_1^{e},p_2,\dots,p_n$ are all of the form $p_j=4n+1$, then $4m-1 = 4w+1 = p_1^e\cdot p_2\cdots p_n$, a contradiction. Therefore, there is at least one prime number of the form $p=4k-1$ where $n=p(4w+1)=4m-1$, so $p\le n$.
\end{proof}

\begin{theorem}[Fermat's theorem on the sum of two squares]\label{thm2}
Let $m$ be a natural number that is not a perfect square. Then
\[
m = a^2+b^2
\]
if and only if all prime factors of $m$ are not of the form $p=4n-1$.
\end{theorem}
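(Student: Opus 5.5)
The plan is to prove the classical form of Fermat's theorem and then see which parts of the stated version it actually gives. The classical form says that a natural number $m$ is a sum of two squares if and only if every prime $p\equiv -1\pmod 4$ divides $m$ to an even power. Doing this first matters, because the \emph{only if} direction of THEOREM~\ref{thm2} as literally stated is false: $18=3^2+3^2$ is not a perfect square, yet $3\mid 18$ with $3\equiv -1\pmod 4$. Likewise $45=6^2+3^2$. So the proof will establish the \emph{if} direction exactly as stated, and the \emph{only if} direction only in the corrected form. For $m$ with every $p\equiv -1\pmod 4$ occurring to an odd power, "some such $p$ divides $m$" and "some such $p$ divides $m$ to an odd power" say the same thing, so the stated \emph{only if} holds for those $m$. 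That is the form in which THEOREM~\ref{thm1} and this result are used later for THEOREM~3, where one only needs that a number $\equiv -1\pmod 4$ cannot be a sum of two squares.

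For the \emph{if} direction, I first use the Brahmagupta--Fibonacci identity
\[
(a^2+b^2)(c^2+d^2)=(ac-bd)^2+(ad+bc)^2 .
\]
This shows that sums of two squares are closed under multiplication. So it suffices to write each prime factor of $m$ as a sum of two squares. This is trivial for $2=1^2+1^2$, and any even power $q^{2e}=(q^e)^2+0^2$ is also handled. The remaining step is to show that a prime $p\equiv 1\pmod 4$ is a sum of two squares, in two parts.
\begin{enumerate}
\item Since $p\equiv 1\pmod 4$, Wilson's theorem gives $x=\left(\frac{p-1}{2}\right)!$ with $x^2\equiv -1\pmod p$.
\item Apply Thue's lemma, a pigeonhole argument on the pairs $u-xv$ with $0\le u,v\le\lfloor\sqrt p\rfloor$. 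It yields $0<u^2+v^2<2p$ with $u^2+v^2\equiv u^2+x^2v^2\equiv 0\pmod p$, so $u^2+v^2=p$.
\end{enumerate}

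For the \emph{only if} direction in its correct form, I argue by descent on the exponent. Suppose $p\equiv -1\pmod 4$ and $p\mid a^2+b^2$. If $p\nmid b$, then $(ab^{-1})^2\equiv -1\pmod p$. Raising to the power $(p-1)/2$, which is odd, gives $1\equiv(-1)^{(p-1)/2}=-1\pmod p$, a contradiction. Hence $p\mid a$ and $p\mid b$, so $p^2\mid a^2+b^2$ and $m/p^2=(a/p)^2+(b/p)^2$. Iterating, $p$ occurs to an even power in $m$. As a special case this reproves the fact that no $n\equiv -1\pmod 4$ is a sum of two squares. That special case also follows directly from squares being $0$ or $1$ modulo $4$, which is all THEOREM~3 needs.

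The main obstacle is the existence step for primes $p\equiv 1\pmod 4$, namely producing the square root of $-1$ and running Thue's lemma. Everything else is bookkeeping with the multiplicativity identity and the descent. I would also flag explicitly that the hypothesis ``$m$ is not a perfect square'' plays no role in the argument and does not rescue the literal statement. I would therefore state in the text that THEOREM~\ref{thm2} is to be read with ``all prime factors $p\equiv -1\pmod 4$ occur to even powers''.
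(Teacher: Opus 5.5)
You are right that the \emph{only if} direction is false as stated ($18=3^2+3^2$, $45=6^2+3^2$, and the non-square hypothesis does not help), and your proposal is sound. For the direction the paper actually proves, your route is the same as the paper's. The paper's proof treats only the \emph{if} direction: it cites Fermat's theorem for primes $p\equiv 1\pmod 4$ from its reference, uses $2=1^2+1^2$, and inducts on the number of prime factors with a product identity. You differ in two ways: you prove the prime case yourself (Wilson plus Thue), and you state the identity correctly, whereas the paper's second square $(a_1a_2-b_1a_2)^2$ should be $(a_1b_2-b_1a_2)^2$. Your descent argument for the corrected converse has no counterpart in the paper, which silently omits the \emph{only if} half; that addition is what makes your treatment complete.

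Two small corrections. In the Thue step the chain should be $u^2+v^2\equiv u^2-x^2v^2=(u-xv)(u+xv)\equiv 0\pmod p$. As you wrote it, $u^2+x^2v^2\equiv u^2-v^2\equiv -2v^2$, which is not $0$ here. Second, your claim that THEOREM~3 needs only that no $n\equiv -1\pmod 4$ is a sum of two squares is inaccurate. For $x\equiv 1\pmod 4$ one has $(x+1)f(x)\equiv 2\pmod 4$, a residue that $y^2+\alpha^2$ can take. So the argument there really needs that a prime $q\equiv -1\pmod 4$ dividing $f(x)$ cannot divide $y^2+\alpha^2$ when $q\nmid\alpha$. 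That is precisely the lemma in your descent step ($q\mid a^2+b^2$ implies $q\mid a$ and $q\mid b$). So your proof does supply what is used downstream, just not through the mod-$4$ observation.
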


\begin{proof}
(See proof in \cite[Chp.\,8, p.\,242]{ref24}.) Suppose all prime divisors of $m$ are not of the form $4n-1$. If $m=1$ then $m=1^2+0^2$, and if $m>1$ then $m=\prod_{i=1}^r p_i$. Now, if $p_i=1^2+1^2$ (for $p_i=2$) and if $p_i\equiv 1\pmod 4$ for all $i$, we get $p_i = a_i^2+b_i^2$ according to Fermat's theorem on the sum of two squares (see \cite[Thm.\,15]{ref24}). But
\[
p_1p_2 = (a_1^2+b_1^2)(a_2^2+b_2^2) = (a_1a_2+b_1b_2)^2+(a_1a_2-b_1a_2)^2.
\]
So by induction on $r$ we can prove that
\[
\prod_{i=1}^r p_i = a^2+b^2. \qedhere
\]
\end{proof}

\begin{theorem}\label{thm3}
Let $A,B,C,\alpha\in\mathbb{Z}$ be integers, with $\alpha$ odd positive, such that all prime divisors of $\alpha = p_1\cdot p_2\cdot p_3\cdots p_n$ are of the form $p_j=4n+1$. Then
\[
y^2+\alpha^2 \ne (x+1)f(x) \quad \text{for all } (x,y) > (1,0) \in (\mathbb{Z}^2)^+,
\]
where
\[
f(x) = A\sum_{j=0}^2 \binom{3}{j}(x+1)^{2-j}(-1)^j + B(x-1)+C,
\]
and
\[
-3A+C\equiv -1\pmod 4, \qquad A-B+C\equiv -1\pmod 4.
\]
\end{theorem}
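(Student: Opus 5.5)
The plan is to first simplify $f$ and then run the classical argument that a sum of two coprime-type squares has no prime factor $\equiv -1 \pmod 4$. The sum in $f$ is
\[
(x+1)^2-3(x+1)+3 = x^2-x+1,
\]
so
\[
f(x)=A(x^2-x+1)+B(x-1)+C \quad\text{and}\quad (x+1)f(x)=A(x^3+1)+B(x^2-1)+C(x+1).
\]
Suppose, for contradiction, that $y^2+\alpha^2=(x+1)f(x)$ for some integers $x>1$, $y\ge 1$, and write $N=y^2+\alpha^2$. Since $N>0$ and $x+1>0$, we also get $f(x)>0$. Both $x+1$ and $f(x)$ are therefore positive divisors of $N$.

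The key auxiliary fact concerns the primes dividing $N$. Let $q\equiv -1\pmod 4$ be a prime with $q\mid N$. If $q\nmid \alpha$, then $y\alpha^{-1}$ is a square root of $-1$ modulo $q$. This is impossible, because $(-1)^{(q-1)/2}=-1$ (Euler's criterion). Hence $q\mid\alpha$, which contradicts the hypothesis that every prime divisor of $\alpha$ is $\equiv 1\pmod 4$. So no prime $\equiv -1 \pmod 4$ divides $N$. Combining this with THEOREM 1, no positive divisor of $N$ can be $\equiv -1\pmod 4$, since such a divisor would have a prime factor $\equiv -1\pmod 4$, and that prime would divide $N$.

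It remains to show that one of $x+1$, $f(x)$, or $N$ itself is incompatible with this, by splitting into cases on $x \bmod 4$. We use the reductions
\[
f(x)\equiv A(x^2-x+1)+B(x-1)+C \pmod 4.
\]
\begin{itemize}
\item If $x\equiv 2$, then $x+1\equiv 3\pmod 4$ is a forbidden divisor.
\item If $x\equiv 0$, then $f(x)\equiv A-B+C\equiv -1\pmod 4$ by the second congruence, so $f(x)$ is a forbidden divisor.
\item If $x\equiv 1$, then $f(x)\equiv A+C\pmod 4$, and $A+C\equiv -3A+C\equiv -1\pmod 4$ by the first congruence. Again $f(x)$ is forbidden.
\item If $x\equiv 3$, then $4\mid x+1$, so $4\mid N$. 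But $\alpha$ is odd, so $N=y^2+\alpha^2\equiv 1$ or $2\pmod 4$, a contradiction.
\end{itemize}
Every residue class leads to a contradiction, so no such $(x,y)$ exists.

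I expect the only delicate point to be the auxiliary fact that primes $\equiv -1\pmod 4$ cannot divide $y^2+\alpha^2$ when they do not divide $\alpha$. I would prove it directly via Euler's criterion rather than through THEOREM 2, because THEOREM 2 as stated excludes perfect squares and also does not directly address divisors. The other potential snag is the positivity of $f(x)$, which is needed to apply THEOREM 1 to it; this follows from $N>0$ and $x+1>0$.
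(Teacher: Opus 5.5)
Your proof is correct and follows essentially the same route as the paper: the same split on $x \bmod 4$, with $4\mid x+1$ ruling out $x\equiv 3$, $x+1\equiv 3 \pmod 4$ ruling out $x\equiv 2$, and $f(x)\equiv A-B+C$ (for $x\equiv 0$) or $f(x)\equiv -3A+C$ (for $x\equiv 1$) being $\equiv -1 \pmod 4$, so that THEOREM 1 yields a prime $q\equiv -1\pmod 4$ dividing $y^2+\alpha^2$. You differ only in rigor: you prove directly via Euler's criterion that no such $q$ divides $y^2+\alpha^2$, where the paper cites THEOREM 2, whose ``only if'' direction is false as stated (e.g.\ $18=3^2+3^2$); and you check $f(x)>0$ before invoking THEOREM 1, which the paper leaves implicit.
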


\begin{proof}
Let $A,B,C,\alpha\in\mathbb{Z}$ be positive integers where $\alpha$ is an odd positive integer, all of whose prime divisors $\alpha=p_1\cdot p_2\cdot p_3\cdots p_n$ are of the form $p_j=4n+1$. Let
\[
y^2+\alpha^2 = (x+1)f(x), \qquad f(x)=A\sum_{j=0}^2\binom{3}{j}(x+1)^{2-j}(-1)^j+B(x-1)+C.
\]

\textbf{Case $x=4m-1$.} Assume $(x,y)>(1,0)\in(\mathbb{Z}^2)^+$ where $x=4m-1$, $y\in\mathbb{Z}^+$. Then
\[
y^2+\alpha^2 = (4m-1+1)f(4m-1) = (4m)f(4m-1).
\]
Notice that $(4m)f(4m-1)\equiv 0\pmod 4$, so $y^2+\alpha^2\equiv 0\pmod 4$. Since $\alpha$ is fixed odd, $\alpha=2c+1$. If $y=2b+1$ is also odd, then $(2b+1)^2+(2c+1)^2\equiv 2\pmod 4$, so $(2b+1)^2+(2c+1)^2\not\equiv 0\pmod4$; and if $y=2b$ is even, then $(2b)^2+(2c+1)^2\equiv 1\pmod 4$. We conclude
\begin{equation}
y^2+\alpha^2\not\equiv 0\pmod 4 \ \Rightarrow\ y^2+\alpha^2\ne (4m)f(4m-1)\ \text{for all } y\in\mathbb{Z}^+. \tag{2.1}
\end{equation}
So we conclude from (2.1):
\begin{equation}
y^2+\alpha^2 \ne (x+1)f(x) \quad\text{if } (x=4m-1,y)\in\mathbb{Z}^{2+}. \tag{2.2}
\end{equation}

\textbf{Case $x=2(2f+1)$.} Now assume $(x,y)>(1,0)\in\mathbb{Z}^{2+}$, $x=2(2f+1)$, $y\in\mathbb{Z}^+$. Then
\[
y^2+\alpha^2=(4f+2+1)f\big(2(2f+1)\big) = \big(4(f+1)-1\big) f\big(2(2f+1)\big).
\]
By Theorem \ref{thm1}, there exists a prime $q=4n-1$ dividing $4(f+1)-1=4C-1$. So $4C-1=qm$ with $q=4n-1$, hence
\begin{equation}
y^2+\alpha^2 = (qm)\,f\big(2(2f+1)\big). \tag{2.3}
\end{equation}
By Fermat's theorem (Theorem \ref{thm2}), all prime divisors of $y^2+\alpha^2$ are of the form $4n+1$; hence
\begin{equation}
y^2+\alpha^2 \ne (qm)f\big(2(2f+1)\big). \tag{2.4}
\end{equation}
Then
\begin{equation}
y^2+\alpha^2 \ne (x+1)f(x) \quad\text{if } (x=2(2f+1),y)\in\mathbb{Z}^{2+}. \tag{2.5}
\end{equation}

Now assume $(x,y)>(1,0)\in \mathbb{Z}^{2+}$, $x=2(2f)$, $y\in\mathbb{Z}^+$. Then
\begin{equation}
y^2+\alpha^2 = (4f+1)f\big(2(2f)\big), \tag{2.6}
\end{equation}
where
\begin{equation}
f\big(2(2f)\big) = A\sum_{j=0}^2\binom{3}{j}(4f+1)^{2-j}(-1)^j + B(4f-1)+C. \tag{2.7}
\end{equation}
Then
\[
A\sum_{j=0}^2\binom{3}{j}(4f+1)^{2-j}(-1)^j = A\big((4f+1)^2-3(4f+1)+3\big) = A\big((4f)^2+8f+1-12f\big)=4A(4f^2-f)+A.
\]
Let $n=A(4f^2-f)$. Then
\begin{equation}
A\sum_{j=0}^2\binom3j(4f+1)^{2-j}(-1)^j = 4n+A. \tag{2.8}
\end{equation}
From (2.6) and (2.7):
\[
f\big(2(2f)\big) = 4n+A+B(4f)-B+C = 4(n+Bf)+A-B+C.
\]
Let $k=n+Bf$. Then $f(2(2f)) = 4k+A-B+C$. Assume $A-B+C=4n-1$ for some $n\in\mathbb{Z}$. Then
\begin{equation}
f\big(2(2f)\big) = 4k+4n-1 = 4m-1. \tag{2.9}
\end{equation}
From (2.6) and (2.9):
\[
y^2+\alpha^2 = (4f+1)(4m-1).
\]
By Theorem \ref{thm1}, there exists a prime $q=4n-1$ dividing $4m-1$, so $4m-1=qw$, hence
\[
y^2+\alpha^2 = (4f+1)(wq).
\]
By Theorem \ref{thm2}, all prime divisors of $y^2+\alpha^2$ are of the form $4n+1$, so
\[
y^2+\alpha^2 \ne (4f+1)(wq).
\]
Then
\begin{equation}
y^2+\alpha^2 \ne (x+1)f(x) \quad\text{if } (x=2(2f),y)\in\mathbb{Z}^{2+}. \tag{2.10}
\end{equation}

\textbf{Case $x=4m+1$.} Now assume $(x,y)>(1,1)\in\mathbb{Z}^{2+}$, $x=4m+1$, $y\in\mathbb{Z}^+$. Then
\begin{equation}
y^2+\alpha^2 = (4m+2)f(4m+1), \tag{2.11}
\end{equation}
\begin{equation}
f(4m+1) = A\sum_{j=0}^2\binom3j(4m+2)^{2-j}(-1)^j + B(4m)+C. \tag{2.12}
\end{equation}
We have
\[
A\sum_{j=0}^2\binom3j(4m+2)^{2-j}(-1)^j = A(4m+2)^2-3A(4m+2)+3A = 4\big(4A(2m+1)^2-3Am\big)-3A.
\]
Let $n=4A(2m+1)^2-3Am$. Then
\begin{equation}
A\sum_{j=0}^2\binom3j(4m+2)^{2-j}(-1)^j = 4n-3A. \tag{2.13}
\end{equation}
From (2.12) and (2.13): $f(4m+1)=4(n+Bm)-3A+C$. Let $k=n+Bm$; then $f(4m+1)=4k-3A+C$. Assume $-3A+C=4m-1$ for some $m\in\mathbb{Z}$. Then
\begin{equation}
f(4m+1) = 4k-3A+C = 4k+4m-1 = 4(k+m)-1 = 4w-1. \tag{2.14}
\end{equation}
From (2.11) and (2.14): $y^2+\alpha^2=(4m+2)(4w-1)$. By Theorem \ref{thm1}, there exists a prime $q=4n-1$ dividing $4w-1$, so $4w-1=qm$, hence $y^2+\alpha^2=(4m+2)qm$. By Theorem \ref{thm2}, all prime divisors of $y^2+\alpha^2$ are of the form $4n+1$, so
\begin{equation}
y^2+\alpha^2 \ne (x+1)f(x) \quad\text{if } (x=4m+1,y)\in\mathbb{Z}^{2+}. \tag{2.15}
\end{equation}

So according to equations (2.2), (2.5), (2.10), (2.15), we have: if $(x,y)=(2m,4m,4m-1,4m+1)\in(\mathbb{Z}^2)^+$ then $y^2+\alpha^2\ne(x+1)f(x)$. So it is easy to conclude:
\[
\text{if } (x,y)>(1,0)\in(\mathbb{Z}^2)^+ \text{ then } y^2+\alpha^2 \ne (x+1)f(x),
\]
where
\[
f(x) = A\sum_{j=0}^2\binom3j(x+1)^{2-j}(-1)^j + B(x-1)+C,
\]
and
\[
-3A+C\equiv -1\pmod4, \qquad A-B+C\equiv -1\pmod4. \qedhere
\]
\end{proof}

\section{The Elliptic Curve in $\mathbb{Z}^+$}

In this section, we establish the main results of the study, namely THEOREM 5, which determines the elliptic curves and the conditions that must be satisfied by their coefficients in order to be insoluble in the set of positive integers. In the proof, we rely on THEOREM 3 and THEOREM 4.

\begin{theorem}\label{thm4}
Let $E: y^2 = Ax^3+Bx^2+Cx+D$ be an algebraic curve where $A,B,C,D\in\mathbb{Z}$ are coefficients of $E$ and $k$ is an odd number, all of whose prime divisors are of the form $p=4n+1$, and $E(\mathbb{Z}^+)$ has positive integer points. Then
\[
\text{if } y^2+k^2 \ne (x+1)f(x) \text{ then } (x,y)\notin E(\mathbb{Z}^+),
\]
where
\[
f(x) = A\sum_{j=0}^2\binom3j(x+1)^{2-j}(-1)^j+B(x-1)+C, \qquad k^2 = A-B+C-D.
\]
\end{theorem}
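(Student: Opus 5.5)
The plan is to prove the contrapositive by showing that, under the normalization $k^2=A-B+C-D$, the curve equation and the relation $y^2+k^2=(x+1)f(x)$ are the same equation. Concretely, I will show that for every $(x,y)\in\mathbb{Z}^2$,
\[
(x+1)f(x)-\big(y^2+k^2\big)=Ax^3+Bx^2+Cx+D-y^2 .
\]
Then $(x,y)\in E(\mathbb{Z}^+)$ forces $y^2+k^2=(x+1)f(x)$. Equivalently, $y^2+k^2\neq(x+1)f(x)$ gives $(x,y)\notin E(\mathbb{Z}^+)$, which is the statement. The hypotheses that $k$ is odd with all prime divisors $\equiv 1\pmod 4$ play no role in this step; they are only carried along so that Theorem \ref{thm3} can later be applied with $\alpha=k$.

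\textbf{Step 1: simplify $f$.} Expand the binomial sum:
\[
\sum_{j=0}^2\binom3j(x+1)^{2-j}(-1)^j=(x+1)^2-3(x+1)+3=x^2-x+1 .
\]
Hence
\[
f(x)=A(x^2-x+1)+B(x-1)+C .
\]

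\textbf{Step 2: multiply by $x+1$.} Use the factorizations $(x+1)(x^2-x+1)=x^3+1$ and $(x+1)(x-1)=x^2-1$. This gives
\[
(x+1)f(x)=Ax^3+Bx^2+Cx+(A-B+C).
\]

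\textbf{Step 3: substitute $k^2$.} Since $k^2=A-B+C-D$, we have $A-B+C=D+k^2$. Therefore
\[
(x+1)f(x)=Ax^3+Bx^2+Cx+D+k^2,
\]
and subtracting $y^2+k^2$ from both sides yields the displayed identity. Finally, restrict to positive integer points and take the contrapositive.

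There is no real obstacle here; the whole proof is the one polynomial identity. The only step that needs care is Step 1: the sum runs over $j=0,1,2$ only, not $j=0,\dots,3$, which is why the cubic identity $(x+1)(x^2-x+1)=x^3+1$ appears rather than a full binomial expansion $(x+1-1)^3$. This is also where the constant term $A-B+C$, and hence the normalization $k^2=A-B+C-D$, comes from.
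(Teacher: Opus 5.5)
Your proposal is correct and is essentially the paper's argument: both reduce the claim to the identity $(x+1)f(x)=Ax^3+Bx^2+Cx+(A-B+C)$ and then substitute $A-B+C=D+k^2$. The differences are cosmetic. The paper obtains the $x^3+1$ term as $\bigl((x+1)-1\bigr)^3-(-1)^3$, which is the full binomial expansion minus the missing $j=3$ term and so the same as your factorization, and it carries the ``$\ne$'' through the algebra directly, whereas you state the identity and take the contrapositive, which is cleaner.
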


\begin{proof}
Suppose the equation is given and its coefficients satisfy
\begin{equation}
k^2 = A-B+C-D, \tag{3.1}
\end{equation}
\begin{equation}
y^2+k^2 \ne (x+1)\left(A\sum_{j=0}^2\binom3j(x+1)^{2-j}(-1)^j+B(x-1)+C\right). \tag{3.2}
\end{equation}
We have
\[
y^2+k^2 \ne \sum_{j=0}^2\binom3j(x+1)^{3-j}(-1)^j + B(x^2-1)+C(x+1).
\]
Then
\[
y^2+k^2 \ne A\big((x+1-1)^3-(-1)^3\big)+B(x^2-1)+C(x+1).
\]
Hence
\begin{equation}
y^2 \ne Ax^3+Bx^2+Cx+A-B+C-k^2. \tag{3.3}
\end{equation}
From equation (3.1) we have $k^2+D=A-B+C$, so from (3.1) and (3.3):
\[
y^2 \ne Ax^3+Bx^2+Cx+D. \tag{3.4}
\]
This means $(x,y)\notin E(\mathbb{Z}^+)$. It follows from (3.1), (3.2), (3.4) that
\[
\text{if } y^2+k^2\ne(x+1)f(x) \text{ then } (x,y)\notin E(\mathbb{Z}^+),
\]
where $f(x)$ and $k^2$ are as above. \qedhere
\end{proof}

\begin{theorem}\label{thm5}
Let $E: y^2=Ax^3+Bx^2+Cx+D$ be an algebraic curve where $A,B,C,D,k\in\mathbb{Z}$ are coefficients of $E$, $k$ is an odd number, all of whose prime divisors are of the form $p=4n+1$, and $E(\mathbb{Z}^+)$ has positive integer points. Then if
\[
-3A+C\equiv -1\pmod4,\qquad A-B+C\equiv-1\pmod4, \qquad k^2=A-B+C-D
\]
hold, and if $x>1,\ y\ge1$ where $\forall (x,y)\in E(\mathbb{Z}^+)$, then there are no positive solutions for the curve $E$, i.e.\ $E(\mathbb{Z}^+)=\emptyset$.
\end{theorem}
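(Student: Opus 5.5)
The plan is to argue by contradiction, combining Theorem \ref{thm3} and Theorem \ref{thm4}. Suppose some $(x,y)\in E(\mathbb{Z}^+)$ exists with $x>1$, $y\ge 1$. First I will record the polynomial identity behind Theorem \ref{thm4}. Since $\sum_{j=0}^2\binom3j(x+1)^{2-j}(-1)^j=(x+1)^2-3(x+1)+3=x^2-x+1$, we get
\[
(x+1)f(x)=A(x^3+1)+B(x^2-1)+C(x+1)=Ax^3+Bx^2+Cx+(A-B+C).
\]
With $k^2=A-B+C-D$, the equation $y^2=Ax^3+Bx^2+Cx+D$ is therefore \emph{equivalent} to $y^2+k^2=(x+1)f(x)$. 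So a point of $E(\mathbb{Z}^+)$ gives a positive solution of $y^2+k^2=(x+1)f(x)$ with $x>1$. This is the contrapositive direction of Theorem \ref{thm4}, and the identity makes it an equivalence.

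Next I apply Theorem \ref{thm3} with $\alpha=|k|$. This is an odd positive integer all of whose prime divisors are $\equiv1\pmod4$, and $k^2=\alpha^2$. The hypotheses $-3A+C\equiv-1$ and $A-B+C\equiv-1\pmod4$ are exactly those of Theorem \ref{thm3}. It then gives $y^2+k^2\neq(x+1)f(x)$ for all such $(x,y)$, contradicting the previous step. Hence $E(\mathbb{Z}^+)=\emptyset$ under the stated range $x>1$, $y\ge1$.

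To make sure the appeal to Theorem \ref{thm3} is sound, I would re-verify its core mechanism by splitting $x$ modulo $4$.
\begin{itemize}
\item If $x\equiv3$, then $4\mid(x+1)f(x)$, while $y^2+k^2\equiv1$ or $2\pmod4$.
\item If $x\equiv2$, then $x+1\equiv3\pmod4$.
\item If $x\equiv0$, then $f(x)\equiv A-B+C\equiv3\pmod4$.
\item If $x\equiv1$, then $f(x)\equiv-3A+C\equiv3\pmod4$.
\end{itemize}
In the last three cases a positive factor $\equiv3\pmod4$ divides $y^2+k^2$. By Theorem \ref{thm1} it then has a prime divisor $q\equiv3\pmod4$.

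The key fact, and the step I expect to need the most care, is that such a $q$ cannot divide $y^2+k^2$. The paper cites Theorem \ref{thm2}, but I would argue directly. If $q\mid y^2+k^2$ with $q\equiv3\pmod4$, then since $-1$ is a non-residue mod $q$, we get $q\mid k$ and $q\mid y$. That is impossible because every prime divisor of $k$ is $\equiv1\pmod4$.

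One must also handle the sign of $f(x)$. If $f(x)\le0$, then $(x+1)f(x)\le0<y^2+k^2$ and there is nothing to prove. Otherwise $f(x)$ is a positive integer $\equiv3\pmod4$, so Theorem \ref{thm1} applies and the argument above goes through.
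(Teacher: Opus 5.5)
Your proposal is correct and follows the paper's own route: the identity $(x+1)f(x)=Ax^3+Bx^2+Cx+(A-B+C)$ is the content of Theorem \ref{thm4}, and your split of $x$ modulo $4$ (a mod-$4$ obstruction when $x\equiv 3$, otherwise a prime $q\equiv 3\pmod 4$ dividing $x+1$ or $f(x)$) is exactly the paper's proof of Theorem \ref{thm3}. Your direct quadratic-residue argument forcing $q\mid k$ and your sign check on $f(x)$ are sound repairs of steps the paper handles loosely: its appeal to Theorem \ref{thm2}, which is inaccurate as stated (e.g.\ $18=3^2+3^2$), and its tacit assumption that $f(x)>0$ when invoking Theorem \ref{thm1}.
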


\begin{proof}
By Theorem \ref{thm4}, if
\begin{equation}
y^2+k^2\ne (x+1)f(x) \text{ then } (x,y)\notin E(\mathbb{Z}^+), \qquad k^2=A-B+C-D, \tag{3.5}
\end{equation}
and by Theorem \ref{thm3},
\begin{equation}
y^2+\alpha^2 \ne (x+1)f(x) \quad\text{for all } (x,y)>(1,0)\in(\mathbb{Z}^2)^+, \tag{3.6}
\end{equation}
provided $-3A+C\equiv-1\pmod4$ and $A-B+C\equiv-1\pmod4$. Since $E(\mathbb{Z}^+)\subset(\mathbb{Z}^2)^+$, from (3.5) and (3.6), if $x>1,y\ge1$ where $\forall(x,y)\in E(\mathbb{Z}^+)$ holds, and if
\[
-3A+C\equiv-1\pmod4,\quad A-B+C\equiv-1\pmod4,\quad k^2=A-B+C-D,
\]
then
\begin{equation}
y^2+k^2 \ne (x+1)f(x) \quad\text{for all } (x,y)\in E(\mathbb{Z}^+). \tag{3.7}
\end{equation}
From (3.5) and (3.7) we have $E(\mathbb{Z}^+)=\emptyset$. Then there are no positive solutions for the curve $E$ where $E(\mathbb{Z}^+)=\emptyset$ if $(x,y)>(1,0)$. \qedhere
\end{proof}

\section{Special Cases of Elliptic Curves}

In this section, we use Theorem \ref{thm5} to prove results about solutions of elliptic curves in the set of positive integers, examining special cases and classifying elliptic curves that are unsolvable in the set of integers.

\begin{theorem}\label{thm6}
Let $E: y^2=x^3+Bx^2+Cx+D$ be an algebraic curve where $B,C,D,k\in\mathbb{Z}$ are coefficients of $E$, $k$ odd, all prime divisors of the form $p=4n+1$, and $E(\mathbb{Z}^+)$ has positive integer points. Then if
\[
-3+C\equiv-1\pmod4,\qquad 1-B+C\equiv-1\pmod4,\qquad k^2=1-B+C-D
\]
hold, then there are no positive solutions for the curve $E$, i.e. $E(\mathbb{Z}^+)=\emptyset$ if $x>1,y\ge1$.
\end{theorem}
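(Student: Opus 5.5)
The plan is to obtain Theorem \ref{thm6} as the special case $A=1$ of Theorem \ref{thm5}. Setting $A=1$ in $E: y^2=Ax^3+Bx^2+Cx+D$ gives exactly the curve $y^2=x^3+Bx^2+Cx+D$. The three hypotheses
\[
-3A+C\equiv-1\pmod4,\qquad A-B+C\equiv-1\pmod4,\qquad k^2=A-B+C-D
\]
become, term by term, $-3+C\equiv-1\pmod4$, $1-B+C\equiv-1\pmod4$ and $k^2=1-B+C-D$, which are the hypotheses of Theorem \ref{thm6}. The assumptions on $k$ (odd, every prime divisor $\equiv1\pmod4$) and the restriction $x>1$, $y\ge1$ are carried over unchanged. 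Theorem \ref{thm5} then gives $E(\mathbb{Z}^+)=\emptyset$ directly.

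To make sure nothing in the chain Theorem \ref{thm3} $\Rightarrow$ Theorem \ref{thm4} $\Rightarrow$ Theorem \ref{thm5} uses a property of $A$ that fails for $A=1$, I would re-trace the argument with $A=1$.

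\textbf{Step 1: the identity.} With
\[
f(x)=(x+1)^2-3(x+1)+3+B(x-1)+C
\]
we have $(x+1)f(x)=x^3+1+B(x^2-1)+C(x+1)$. Hence, using $k^2=1-B+C-D$, a point $(x,y)\in E(\mathbb{Z}^+)$ is equivalent to
\[
y^2+k^2=(x+1)f(x).
\]

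\textbf{Step 2: the key fact about $y^2+k^2$.} Since every prime dividing $k$ is $\equiv1\pmod4$, no prime $q\equiv3\pmod4$ divides $y^2+k^2$. Indeed, such a $q$ would have to divide both $y$ and $k$, because $-1$ is not a square modulo $q$. Moreover $y^2+k^2\not\equiv0\pmod4$, since $k$ is odd.

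\textbf{Step 3: case split on $x \bmod 4$.} I would then run the four cases of Theorem \ref{thm3}.
\begin{itemize}
\item If $x\equiv3\pmod4$, then $4\mid x+1$, which contradicts Step 2.
\item If $x\equiv2\pmod4$, then $x+1\equiv3\pmod4$, so Theorem \ref{thm1} gives a prime $q\equiv3\pmod4$ dividing $x+1$, again a contradiction.
\item If $x\equiv0\pmod4$, then $f(x)\equiv 1-B+C\equiv-1\pmod4$.
\item If $x\equiv1\pmod4$, then $f(x)\equiv -3+C\equiv-1\pmod4$.
\end{itemize}
In the last two cases Theorem \ref{thm1} yields a prime $q\equiv3\pmod4$ dividing $f(x)$, contradicting Step 2. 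This is exactly where the two congruence hypotheses are used.

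One extra point needs checking: if $f(x)\le 0$, then $(x+1)f(x)\le0<y^2+k^2$, so there is nothing to prove. Otherwise $|f(x)|\equiv-1\pmod4$ and Theorem \ref{thm1} applies.

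I expect no serious obstacle; the only delicate step is confirming the two residue computations for $f(x)$ when $A=1$. In the case $x=4f$, the constant term of $(x+1)^2-3(x+1)+3$ modulo $4$ is $1=A$. In the case $x=4m+1$, it is $4-6+3\equiv1\equiv-3=-3A\pmod4$. These agree with (2.8) and (2.13), so the specialization goes through verbatim.
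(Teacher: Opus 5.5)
Your proposal is correct and follows the paper, whose entire proof of this statement is the specialization $A=1$ in Theorem~\ref{thm5}. Your optional re-tracing is also sound and slightly tighter than the paper's Theorem~\ref{thm3} argument. You justify that no prime $q\equiv 3 \pmod 4$ divides $y^2+k^2$ because $-1$ is a non-residue mod $q$, whereas the paper relies on its misstated Theorem~\ref{thm2}. You also dispose of the case $f(x)\le 0$, where Theorem~\ref{thm1} would not otherwise apply.
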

\begin{proof}
Let $A=1$ in Theorem \ref{thm5}.
\end{proof}

\begin{theorem}\label{thm7}
Let $E: y^2=x^3+Cx+D$ be an algebraic curve where $C,D,k\in\mathbb{Z}$ are coefficients of $E$, $k$ odd, all prime divisors of the form $p=4n+1$, and $E(\mathbb{Z}^+)$ has positive integer points. Then if
\[
-3+C\equiv-1\pmod4,\qquad 1+C\equiv-1\pmod4,\qquad k^2=1+C-D
\]
hold, then there are no positive solutions for the curve $E$, i.e. $E(\mathbb{Z}^+)=\emptyset$ if $x>1,y\ge1$.
\end{theorem}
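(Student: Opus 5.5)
The statement is the special case $B=0$ of Theorem \ref{thm6}, which is itself the case $A=1$ of Theorem \ref{thm5}. So I would prove it by specialization, with no new arithmetic. Given $E: y^2=x^3+Cx+D$, I regard it as the curve $y^2=x^3+Bx^2+Cx+D$ of Theorem \ref{thm6} with $B=0$. The three hypotheses of Theorem \ref{thm6} then become $-3+C\equiv-1\pmod 4$, $1-0+C\equiv-1\pmod 4$ and $k^2=1-0+C-D$, which are exactly the hypotheses assumed here. The standing assumptions on $k$ (odd, all prime divisors $\equiv1\pmod 4$) and the range $x>1,\ y\ge1$ carry over unchanged, so Theorem \ref{thm6} gives $E(\mathbb{Z}^+)=\emptyset$ directly. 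As a consistency remark, both congruences reduce to the single condition $C\equiv 2\pmod 4$, so the hypothesis set is not vacuous or contradictory.

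To make sure the specialization rests on solid ground, I would also trace the chain back through Theorems \ref{thm4} and \ref{thm3} for $A=1$, $B=0$. Theorem \ref{thm4} is an identity: $(x+1)f(x)=x^3+Cx+1+C$. With $k^2=1+C-D$, a point on $E$ therefore satisfies $y^2+k^2=(x+1)f(x)$.

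Theorem \ref{thm3} rests on one key fact. If a prime $q\equiv3\pmod4$ divides $y^2+k^2$, then $q\mid y$ and $q\mid k$, which is impossible because every prime divisor of $k$ is $\equiv 1\pmod 4$. So $y^2+k^2$ has no prime factor $\equiv 3\pmod 4$. In addition, since $k$ is odd, $y^2+k^2\equiv1$ or $2\pmod 4$. I would then split on $x\bmod 4$:
\begin{itemize}
\item If $x\equiv3\pmod 4$, then $4\mid x+1$, which contradicts $y^2+k^2\equiv1$ or $2\pmod 4$.
\item If $x\equiv2\pmod 4$, then $x+1\equiv3\pmod4$ has a prime factor $\equiv3\pmod4$ (Theorem \ref{thm1}), a contradiction.
\item If $x\equiv0\pmod 4$, then $f(x)\equiv 1+C\equiv 3\pmod4$.
\item If $x\equiv1\pmod 4$, then $f(x)\equiv -3+C\equiv3\pmod4$.
\end{itemize}
In the last two cases $f(x)>0$, because $y^2+k^2>0$ and $x+1>0$. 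A positive integer $\equiv 3\pmod 4$ has a prime factor $\equiv 3\pmod 4$, again a contradiction.

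There is no real obstacle, since the result is a direct corollary. The only point that needs care is confirming that $f(x)$ is positive in the last two cases, so that Theorem \ref{thm1} applies to a genuine positive integer $\equiv 3 \pmod 4$. Positivity follows from the equation itself, because $(x+1)f(x)=y^2+k^2\ge 1+1$ and $x+1\ge 3$.
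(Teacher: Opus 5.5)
Your proposal is correct and matches the paper's proof, which is the single line ``let $B=0$ in Theorem~\ref{thm6}''; the hypotheses specialize verbatim, exactly as you say. Your supplementary trace through Theorems~\ref{thm3} and~\ref{thm4} with $A=1$, $B=0$ is also sound, and it is more careful than the paper in two places: you use the correct fact that a prime $q\equiv 3\pmod 4$ cannot divide $y^2+k^2$ when $q\nmid k$ (the paper instead claims all prime divisors of $y^2+k^2$ are $\equiv 1\pmod 4$), and you make explicit the positivity of $f(x)$, which the paper leaves implicit.
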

\begin{proof}
Let $B=0$ in Theorem \ref{thm6}.
\end{proof}

\begin{lemma}\label{lem1}
Let $E: y^2=x^3+(4t+2)x+D$ be an algebraic curve where $D,k\in\mathbb{Z}$ are coefficients of $E$, $k$ odd, all prime divisors of the form $p=4n+1$, and $E(\mathbb{Z}^+)$ has positive integer points. Then if
\[
k^2 = 4t-D+3
\]
holds, then there are no positive solutions for the curve $E$, i.e. $E(\mathbb{Z}^+)=\emptyset$ if $x>1,y\ge1$.
\end{lemma}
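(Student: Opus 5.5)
The plan is to derive Lemma \ref{lem1} as a direct specialization of Theorem \ref{thm7} with $C=4t+2$. As a backup, I will also give a short self-contained check along the lines of the proof of Theorem \ref{thm3}. The specialization only requires verifying the three hypotheses of Theorem \ref{thm7}:
\[
-3+C=4t-1\equiv -1 \pmod 4,\qquad 1+C=4t+3\equiv -1\pmod 4,\qquad 1+C-D=4t+3-D=k^2 .
\]
The first two congruences hold for every $t\in\mathbb{Z}$, which is why they disappear from the statement of the lemma. The third is exactly the hypothesis $k^2=4t-D+3$. Hence Theorem \ref{thm7}, and through it Theorems \ref{thm5}, \ref{thm4} and \ref{thm3}, gives $E(\mathbb{Z}^+)=\emptyset$ for $x>1$, $y\ge 1$.

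For the direct check, substitute $D=4t+3-k^2$ into $y^2=x^3+(4t+2)x+D$. Using $x^3+1=(x+1)(x^2-x+1)$, this rewrites as
\[
y^2+k^2=(x+1)\bigl(x^2-x+4t+3\bigr),
\]
which is the identity of Theorem \ref{thm4} with $A=1$, $B=0$. Since $x+1>0$ and $y^2+k^2>0$, the second factor is also positive.

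Next I use two facts about $y^2+k^2$ with $k$ odd.
\begin{itemize}
\item It is $\equiv 1$ or $2 \pmod 4$, so it is never divisible by $4$.
\item It has no prime divisor $q\equiv 3\pmod 4$. If $q\mid y^2+k^2$, then $-1$ would be a square mod $q$ unless $q\mid k$ and $q\mid y$. The case $q\mid k$ is excluded because every prime divisor of $k$ is $\equiv 1\pmod 4$.
\end{itemize}
So no positive divisor of $y^2+k^2$ can be $\equiv 3\pmod 4$, since such a divisor must have a prime factor $q\equiv 3 \pmod 4$.

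Then I split $x$ modulo $4$ and exhibit, in each case, either a factor of $4$ or a positive factor $\equiv 3\pmod 4$.
\begin{itemize}
\item $x\equiv 0$: the second factor is $x^2-x+4t+3\equiv 3$.
\item $x\equiv 1$: the second factor is $\equiv 1-1+3=3$.
\item $x\equiv 2$: the first factor is $x+1\equiv 3$.
\item $x\equiv 3$: $4\mid x+1$, which contradicts the first fact.
\end{itemize}
Each case contradicts the two facts, so no positive point exists.

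The main obstacle is making the "no divisor $\equiv 3 \pmod 4$" step rigorous. It needs the coprimality argument above rather than the looser appeal to Theorem \ref{thm2} as stated in the paper, since $y^2+k^2$ may have odd-multiplicity factors in principle. The positivity of the factor $x^2-x+4t+3$ also has to be read off from the factorization, since $t$ may be negative.
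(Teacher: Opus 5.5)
Your main argument, substituting $C=4t+2$ into Theorem~\ref{thm7} and noting that both congruences then hold for every $t$ while the third condition becomes exactly $k^2=4t-D+3$, is precisely the paper's proof. Your direct check is the chain of Theorems~\ref{thm3}--\ref{thm5} unrolled for $A=1$, $B=0$, and it is correct: the coprimality argument properly repairs the paper's loose appeal to Theorem~\ref{thm2}. One small correction to your closing remark: the real danger there is a prime $q\equiv 3 \pmod 4$ dividing both $y$ and $k$, which can only occur to even multiplicity, not odd.
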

\begin{proof}
Let $C=4t+2$ in Theorem \ref{thm7}.
\end{proof}

\begin{theorem}\label{thm8}
Let $E: y^2=Ax^3+Bx^2+Cx+D$ be an algebraic curve where $A,B,C,D,k\in\mathbb{Z}$ are coefficients of $E$, $k$ odd, all prime divisors of the form $p=4n+1$, and $E(\mathbb{Z}^+)$ has positive integer points. Then if
\[
-3A+C\equiv-1\pmod4,\qquad A-B+C\equiv-1\pmod4,\qquad 1=A-B+C-D
\]
and $x>1,y\ge1$ where $\forall(x,y)\in E(\mathbb{Z}^+)$ holds, then there are no positive solutions for the curve $E$.
\end{theorem}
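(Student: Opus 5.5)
The plan is to obtain Theorem \ref{thm8} as the special case $k=1$ of Theorem \ref{thm5}, in the same way that Theorems \ref{thm6}, \ref{thm7} and Lemma \ref{lem1} were obtained by specializing coefficients. The hypotheses of Theorem \ref{thm8} coincide with those of Theorem \ref{thm5} except that $k^2=A-B+C-D$ is replaced by $1=A-B+C-D$. Since $1=1^2$, it suffices to check that $k=1$ is an admissible value of the parameter $k$ in Theorem \ref{thm5}.

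\textbf{Step 1: admissibility of $k=1$.} The integer $k=1$ is odd and positive. It has no prime divisors at all, so the condition ``all prime divisors of $k$ are of the form $p=4n+1$'' holds vacuously. I will state this explicitly, since it is the only point where the specialization is not purely mechanical.

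\textbf{Step 2: the case $\alpha=1$ in Theorem \ref{thm3}.} I will confirm that the ingredients of Theorem \ref{thm5}, namely Theorem \ref{thm3} with $\alpha=1$ and Theorem \ref{thm4} with $k=1$, do not secretly use $\alpha>1$. In Theorem \ref{thm3}, $\alpha$ enters only through two facts:
\begin{itemize}
\item its oddness, used in the mod $4$ computation for $x=4m-1$, where $y^2+1\not\equiv 0 \pmod 4$;
\item the appeal to Theorem \ref{thm2}, used to assert that $y^2+\alpha^2$ has no prime factor $q\equiv -1\pmod 4$.
\end{itemize}
Both are stated for arbitrary odd $\alpha$ whose prime divisors are $\equiv 1 \pmod 4$, so they apply verbatim to $\alpha=1$. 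Likewise, the algebraic identity in Theorem \ref{thm4} reduces $y^2+1=(x+1)f(x)$ to $y^2=Ax^3+Bx^2+Cx+D$ exactly when $1=A-B+C-D$.

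\textbf{Step 3: conclusion.} With $k=1$, the congruences $-3A+C\equiv -1$ and $A-B+C\equiv -1 \pmod 4$, and the standing assumption $x>1$, $y\ge 1$ on points of $E(\mathbb{Z}^+)$, Theorem \ref{thm5} gives $E(\mathbb{Z}^+)=\emptyset$, which is the assertion.

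The main obstacle is not the specialization itself but the fact that its correctness is only as good as Theorem \ref{thm3} at $\alpha=1$. In the $x=2(2f+1)$ and $x=4m+1$ cases, that argument uses Theorem \ref{thm2} in the form ``$y^2+1$ has no prime divisor $q\equiv -1 \pmod 4$''. This form is true, because $-1$ is not a quadratic residue modulo such $q$. If one wanted a self-contained verification, I would reprove this directly via that quadratic-residue fact rather than through the sum-of-two-squares characterization, since the latter is only stated for non-squares.

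Also, $y^2+1$ can be divisible by $2$ but never by $4$. So in the case $x=4m+1$, where $x+1=4m+2$, I would add the observation that the factor $2$ is harmless. The contradiction comes entirely from the prime $q\equiv -1\pmod 4$ dividing $f(x)$.
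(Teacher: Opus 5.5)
Your proposal is correct and follows the paper's proof, which is exactly the specialization $k=1$ in Theorem~\ref{thm5}. Your extra checks are sound and make explicit what the paper leaves implicit: $k=1$ satisfies the prime-divisor condition vacuously, and Theorem~\ref{thm3} at $\alpha=1$ only needs that no prime $q\equiv -1 \pmod 4$ divides $y^2+1$ (which the paper also uses in the $x=4f$ case, not just the two cases you name).
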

\begin{proof}
Let $k=1$ in Theorem \ref{thm5}.
\end{proof}

\begin{lemma}\label{lem2}
Let $E: y^2=Ax^3+Bx^2+Cx+D$ be an algebraic curve where $A,B,C,D\in\mathbb{Z}$ are coefficients of $E$, $p$ a prime number with $p\equiv1\pmod4$, and $E(\mathbb{Z}^+)$ has positive integer points. Then if
\[
-3A+C\equiv-1\pmod4,\qquad A-B+C\equiv-1\pmod4,\qquad p^2=A-B+C-D
\]
hold, then there are no positive solutions for the curve $E$.
\end{lemma}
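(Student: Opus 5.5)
The plan is to obtain Lemma \ref{lem2} as a direct specialization of Theorem \ref{thm5}, in the same way Theorems \ref{thm6}, \ref{thm7}, \ref{thm8} and Lemma \ref{lem1} were obtained. Put $k=p$. The one hypothesis of Theorem \ref{thm5} that needs checking is the one on $k$: it must be odd, and every prime divisor must have the form $4n+1$.

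Since $p\equiv 1\pmod 4$, $p$ is odd. Since $p$ is prime, its only prime divisor is $p$ itself, which has the form $4n+1$. The remaining hypotheses carry over unchanged. The two congruences $-3A+C\equiv-1\pmod4$ and $A-B+C\equiv-1\pmod4$ are identical in both statements. The identity $k^2=A-B+C-D$ becomes $p^2=A-B+C-D$.

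The lemma as stated omits the range restriction $x>1,\ y\ge1$ on the points considered. Theorem \ref{thm5}, and behind it Theorem \ref{thm3}, only treats points with $(x,y)>(1,0)$. So I would read the conclusion $E(\mathbb{Z}^+)=\emptyset$ in the same sense as Theorem \ref{thm5}, namely that there are no positive integer points with $x>1,\ y\ge 1$. With that reading, Theorem \ref{thm5} applied with $k=p$ gives the lemma immediately.

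The main obstacle is not this reduction but what it inherits from Theorem \ref{thm3}. That proof uses Theorem \ref{thm2} to assert that every prime divisor of $y^2+\alpha^2$ is $\equiv1\pmod4$. This fails in general, because a prime $q\equiv 3\pmod 4$ can divide $y^2+\alpha^2$ to an even power, as in $3^2+3^2=18$.

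A fully rigorous argument would therefore need one of the following:
\begin{itemize}
\item an additional coprimality hypothesis, for instance $\gcd(y,p)=1$, together with a check that the $4m-1$ factor cannot be absorbed;
\item a direct verification that the relevant factors, $x+1$ in the case $x\equiv 2\pmod 4$ and $f(x)$ in the other cases, cannot be divisible by such a $q$ to an odd power only.
\end{itemize}
Within the logical framework of the paper, however, the lemma follows immediately from Theorem \ref{thm5} with $k=p$.
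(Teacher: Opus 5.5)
Your reduction is exactly the paper's proof: set $k=p$ in Theorem \ref{thm5}, noting that $p$ is odd and that its only prime divisor is $p\equiv 1\pmod 4$. What is wrong is the ``main obstacle'' you raise afterwards. It does not exist, so the hedge ``within the logical framework of the paper'' is unnecessary.

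You are right that Theorem \ref{thm2} as printed is false, and $18=3^2+3^2$ refutes it. But Theorem \ref{thm3} only uses one consequence: no prime $q\equiv 3\pmod 4$ divides $y^2+\alpha^2$. That consequence is true under the hypothesis on $\alpha$. Every prime factor of $\alpha$ (here $\alpha=p$) is $\equiv 1\pmod 4$, so $q\nmid\alpha$. Then $q\mid y^2+\alpha^2$ would give $(y\alpha^{-1})^2\equiv -1\pmod q$, which is impossible because $-1$ is not a square modulo $q$. Hence $q$ cannot divide $y^2+p^2$ to any power, even or odd. Your example requires $\alpha=3$, which the hypothesis excludes.

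For the same reason, neither of your proposed repairs is needed. A hypothesis like $\gcd(y,p)=1$ is also the wrong coprimality: $y=p$ gives $2p^2$, which is harmless. Checking odd versus even powers of $q$ is moot, since $q$ cannot divide $y^2+p^2$ at all.

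With this fact, the four residue classes of $x$ all close, using $f(x)=Ax^2+(B-A)x+(A-B+C)$:
\begin{itemize}
\item $x\equiv 3\pmod 4$: $4\mid x+1$, while $y^2+p^2\equiv 1$ or $2\pmod 4$.
\item $x\equiv 2\pmod 4$: $x+1\ge 3$ and $x+1\equiv 3\pmod 4$, so it has a prime factor $q\equiv 3\pmod 4$.
\item $x\equiv 0$ or $x\equiv 1\pmod 4$: $f(x)\equiv A-B+C$ or $f(x)\equiv A+C\equiv -3A+C\pmod 4$ respectively. Both are $\equiv -1\pmod 4$.
\end{itemize}
In the last two cases one also needs $f(x)>0$, which follows from $x+1>0$ and $y^2+p^2>0$. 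This is the only step the paper leaves implicit.

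So the lemma is true as stated once $x,y$ are restricted to positive integers. The argument even covers $x=1$, so the range restriction $x>1$ you imported costs nothing.
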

\begin{proof}
Let $k=p$, with $p$ prime of the form $p=4n+1$, in Theorem \ref{thm5}.
\end{proof}

\section{Matrix Construction and Solutions to the Curve}

In this section, the criterion is further generalized by means of linear-algebraic tools. We prove that whenever a given system of linear equations $S$ admits a solution, it is possible to construct algebraic curves from the solutions of $S: Ax=b$, where $A\in M_{n\times m}(\mathbb{Z})$, in such a way that the resulting elliptic curve has no solutions in the set of positive integers.

\begin{theorem}\label{thm9}
Let $S: Ax=b$ be a system of linear equations, where $A\in M_{3\times4}(\mathbb{Z})$. Suppose that $S=\{s_1,s_2,\dots,s_N\}\subset\mathbb{Z}^4$ is the set of solutions of the linear system, and let $k$ be a positive odd integer such that every prime divisor $p$ of $k$ satisfies $p\equiv1\pmod4$, with $u,t\in\mathbb{Z}$. If system $S$ is given by
\[
\begin{pmatrix}
a_{11}&-a_{12}&a_{13}&-a_{14}\\
a_{21}&-a_{22}&a_{23}&0\\
-3a_{31}&0&a_{33}&0
\end{pmatrix}
\begin{pmatrix}c_1\\c_2\\c_3\\c_4\end{pmatrix}
=\begin{pmatrix}k^2\\4u-1\\4t-1\end{pmatrix}
\]
and the entries satisfy $a_{11}=a_{21}=a_{31}$, $a_{12}=a_{22}$, $a_{13}=a_{23}=a_{33}$, and if $s_i=(c_1,c_2,c_3,c_4)\in S$, then there exists an elliptic curve
\[
E_{s_i}: Y^2 = a_{11}c_1X^3+a_{12}c_2X^2+a_{13}c_3X+a_{14}c_4.
\]
If $x>1,y\ge1$ where $\forall(x,y)\in E_{s_i}(\mathbb{Z}^+)$ holds, then
\[
\{E_{s_i}(\mathbb{Z}^+)\}_{i=1}^N = \emptyset.
\]
\end{theorem}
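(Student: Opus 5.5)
The plan is to reduce the statement directly to Theorem \ref{thm5}, by reading off the curve's coefficients from a solution of the linear system. Fix an arbitrary solution $s_i=(c_1,c_2,c_3,c_4)\in S$ and set
\[
A:=a_{11}c_1,\qquad B:=a_{12}c_2,\qquad C:=a_{13}c_3,\qquad D:=a_{14}c_4,
\]
so that $E_{s_i}$ is exactly the curve $Y^2=AX^3+BX^2+CX+D$ of Theorem \ref{thm5}. All of these are integers, since the matrix entries and the $c_j$ lie in $\mathbb{Z}$.

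Next I would translate the three rows of the system into the three hypotheses of Theorem \ref{thm5}. The first row reads $a_{11}c_1-a_{12}c_2+a_{13}c_3-a_{14}c_4=k^2$, which is $A-B+C-D=k^2$. For the second row, the entry constraints $a_{21}=a_{11}$, $a_{22}=a_{12}$ and $a_{23}=a_{13}$ turn it into $A-B+C=4u-1$, that is, $A-B+C\equiv-1\pmod 4$. For the third row, the constraints $a_{31}=a_{11}$ and $a_{33}=a_{13}$ turn it into $-3A+C=4t-1$, that is, $-3A+C\equiv-1\pmod4$. The number $k$ is odd and all its prime divisors are $\equiv1\pmod4$, exactly as Theorem \ref{thm5} requires. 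The standing assumption that every $(x,y)\in E_{s_i}(\mathbb{Z}^+)$ has $x>1$ and $y\ge1$ is carried over verbatim.

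Theorem \ref{thm5} then gives $E_{s_i}(\mathbb{Z}^+)=\emptyset$. Since $s_i$ was an arbitrary element of $S$, the same holds for every $i=1,\dots,N$, which is the claimed statement $\{E_{s_i}(\mathbb{Z}^+)\}_{i=1}^N=\emptyset$. The "existence" of the curve $E_{s_i}$ is simply its definition from $s_i$. The reduction itself is bookkeeping; the only care needed is in matching signs, namely that the $-a_{12}$ and $-a_{14}$ entries produce $-B$ and $-D$, and that $-3a_{31}$ produces $-3A$.

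The main obstacle is not in this reduction but in what it inherits. The conclusion is only as strong as Theorem \ref{thm5}, and hence as Theorem \ref{thm3}. In Theorem \ref{thm3}, the step "$q\mid(x+1)f(x)$ with $q\equiv-1\pmod4$ contradicts Fermat's theorem" needs two things. First, $q$ must divide $y^2+\alpha^2$ to an odd power. Second, one needs $\gcd(y,\alpha)=1$: otherwise a prime $q\equiv-1\pmod4$ can divide $y^2+\alpha^2$ to an even power, for instance $q^2\mid y^2+\alpha^2$ whenever $q$ divides both $y$ and $\alpha$. So if I were making the argument airtight, I would add hypotheses under which $q$ divides $y^2+\alpha^2$ to an odd power and $\gcd(y,\alpha)=1$, and verify them before invoking Theorem \ref{thm5}. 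Short of that, the honest plan is to cite Theorem \ref{thm5} as established in the paper.
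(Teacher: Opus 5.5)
Your reduction is exactly the paper's proof. You set $A=a_{11}c_1$, $B=a_{12}c_2$, $C=a_{13}c_3$, $D=a_{14}c_4$, use the entry equalities to read the three rows as $k^2=A-B+C-D$ and the two congruences modulo $4$, and invoke Theorem~\ref{thm5} for each $s_i$. (The paper's text swaps the labels $u,t$ relative to the matrix, which is immaterial.)

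Your closing worry about Theorem~\ref{thm3} is unfounded. Every prime divisor of $\alpha$ is $\equiv 1\pmod 4$, so a prime $q\equiv -1\pmod 4$ never divides $\alpha$, and then $q\mid y^2+\alpha^2$ would make $-1$ a square modulo $q$; so neither $\gcd(y,\alpha)=1$ nor odd-power divisibility is needed. The paper's actual loose ends there are harmless: $2$ may divide $y^2+\alpha^2$, and $f(x)$ could be $\le 0$, in which case $(x+1)f(x)\le 0<y^2+\alpha^2$ anyway.
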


\begin{proof}
By Theorem \ref{thm5}: if $-3A+C\equiv-1\pmod4$, $A-B+C\equiv-1\pmod4$, $k^2=A-B+C-D$, and $x>1,y\ge1$ where $\forall(x,y)\in E(\mathbb{Z}^+)$ holds, then there are no positive solutions for $E$. So from Theorem \ref{thm5}, the coefficients satisfy
\[
-3A+C=4u-1 \text{ where } u\in\mathbb{Z}, \qquad A-B+C=4t-1 \text{ where } t\in\mathbb{Z},
\]
\[
A-B+C-D=k^2.
\]
Assume $A=a_{11}c_1$, $B=a_{12}c_2$, $C=a_{13}c_3$, $D=a_{14}c_4$, where $a_{ij},c_j\in\mathbb{Z}$. As a result, we get a system of linear equations:
\[
a_{11}c_1-a_{12}c_2+a_{13}c_3-a_{14}c_4=k^2,
\]
\[
a_{11}c_1-a_{12}c_2+a_{13}c_3=4t-1,
\]
\[
-3a_{11}c_1+a_{13}c_3=4u-1.
\]
Organized into matrix form $S: Ax=b$ where $A\in M_{3\times4}(\mathbb{Z})$:
\[
\begin{pmatrix}
a_{11}&-a_{12}&a_{13}&-a_{14}\\
a_{21}&-a_{22}&a_{23}&0\\
-3a_{31}&0&a_{33}&0
\end{pmatrix}
\begin{pmatrix}c_1\\c_2\\c_3\\c_4\end{pmatrix}
=\begin{pmatrix}k^2\\4u-1\\4t-1\end{pmatrix}.
\]
If the entries satisfy $a_{11}=a_{21}=a_{31}$, $a_{12}=a_{22}$, $a_{13}=a_{23}=a_{33}$, and there exists a solution $s_1=(c_1,c_2,c_3,c_4)$, then there exists an elliptic curve
\[
E_{s_1}: Y^2=a_{11}c_1X^3+a_{12}c_2X^2+a_{13}c_3X+a_{14}c_4.
\]
The existence of a solution means the system satisfies the conditions of Theorem \ref{thm5} on the coefficients of $E_{s_i}$; therefore $E_{s_1}(\mathbb{Z}^+)=\emptyset$. Assume there is another solution $s_2=(c_1,c_2,\dots,c_4)$, giving a curve $E_{s_2}: Y^2=a_{11}c_1X^3+a_{12}c_2X^2+a_{13}c_3X+a_{14}c_4$; then likewise $E_{s_2}(\mathbb{Z}^+)=\emptyset$. In general, if $S=\{s_1,\dots,s_N\}\subset\mathbb{Z}^4$ represents all solutions of the system, then $\{E_{s_i}(\mathbb{Z}^+)\}_{i=1}^N=\emptyset$. \qedhere
\end{proof}

\begin{theorem}\label{thm10}
Let $S: Ax=b$ be a system of linear equations, where $A\in M_{4\times3}(\mathbb{Z})$. Suppose $S=\{s_1,\dots,s_N\}\subset\mathbb{Z}^4$ is the set of solutions, and let $k$ be a positive odd integer such that every prime divisor $p$ of $k$ satisfies $p\equiv1\pmod4$, with $u,t\in\mathbb{Z}$. If system $S$ is given by
\[
\begin{pmatrix}
a_{11}&-a_{12}&a_{13}&-a_{14}&0&0&0\\
a_{21}&-a_{22}&a_{23}&0&4a_{25}&4a_{26}&4a_{37}\\
-3a_{31}&0&a_{33}&0&4a_{35}&4a_{36}&4a_{37}
\end{pmatrix}
\begin{pmatrix}c_1\\c_2\\c_3\\c_4\\c_5\\c_6\\c_7\end{pmatrix}
=\begin{pmatrix}k^2\\4u-1\\4t-1\end{pmatrix}
\]
and the entries satisfy $a_{11}=a_{21}=a_{31}$, $a_{12}=a_{22}$, $a_{13}=a_{23}=a_{33}$, and $s_i=(c_1,\dots,c_7)\in S$, then there exists an elliptic curve
\[
E_{s_i}: Y^2=a_{11}c_1X^3+a_{12}c_2X^2+a_{13}c_3X+a_{14}c_4.
\]
If $x>1,y\ge1$ where $\forall(x,y)\in E_{s_i}(\mathbb{Z}^+)$ holds, then $\{E_{s_i}(\mathbb{Z}^+)\}_{i=1}^N=\emptyset$.
\end{theorem}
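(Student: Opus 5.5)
The plan is to reduce the statement to Theorem \ref{thm5}, following the proof of Theorem \ref{thm9}. The only new feature is the three extra unknowns $c_5,c_6,c_7$. In the second and third rows they carry coefficients that are multiples of $4$, and in the first row they do not appear at all. So they should drop out of every congruence modulo $4$ and leave the relation $k^2=A-B+C-D$ untouched.

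\textbf{Step 1: set up the curve.} Fix a solution $s_i=(c_1,\dots,c_7)\in S$. As in Theorem \ref{thm9}, put $A=a_{11}c_1$, $B=a_{12}c_2$, $C=a_{13}c_3$ and $D=a_{14}c_4$, so that $E_{s_i}$ is the curve $Y^2=AX^3+BX^2+CX+D$ with integer coefficients.

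\textbf{Step 2: read off the three conditions.} Use the identifications $a_{11}=a_{21}=a_{31}$, $a_{12}=a_{22}$ and $a_{13}=a_{23}=a_{33}$.
\begin{itemize}
\item The first row gives exactly $A-B+C-D=k^2$, since the entries in columns $5,6,7$ are zero.
\item The second row gives $A-B+C+4(a_{25}c_5+a_{26}c_6+a_{37}c_7)=4u-1$. Hence $A-B+C=4\bigl(u-a_{25}c_5-a_{26}c_6-a_{37}c_7\bigr)-1\equiv-1\pmod4$.
\item The third row similarly gives $-3A+C=4\bigl(t-a_{35}c_5-a_{36}c_6-a_{37}c_7\bigr)-1\equiv-1\pmod4$.
\end{itemize}
In the second row I treat the entry written $4a_{37}$ simply as some integer multiple of $4$; its index plays no role in the argument. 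The only thing used about the extra columns is that each of their entries is divisible by $4$ and that every $c_j$ is an integer.

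\textbf{Step 3: apply Theorem \ref{thm5}.} The coefficients of $E_{s_i}$ now satisfy $-3A+C\equiv-1\pmod4$, $A-B+C\equiv-1\pmod4$ and $k^2=A-B+C-D$. Here $k$ is odd and all its prime divisors are $\equiv1\pmod4$. Under the standing restriction $x>1$, $y\ge1$ on points of $E_{s_i}(\mathbb{Z}^+)$, Theorem \ref{thm5} gives $E_{s_i}(\mathbb{Z}^+)=\emptyset$. Since $s_i$ was arbitrary, this holds for every $i=1,\dots,N$.

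\textbf{Main obstacle.} There is essentially no obstacle beyond bookkeeping. The one point needing care is to confirm that the auxiliary variables enter the second and third equations only through multiples of $4$. This is what turns the equalities $=4u-1$ and $=4t-1$ into the congruences required by Theorem \ref{thm5}, rather than into the exact equalities used in Theorem \ref{thm9}. One should also confirm that these variables do not enter the first equation, so that the exact equality $k^2=A-B+C-D$ is preserved. Everything else is inherited verbatim from Theorem \ref{thm5}, which I take as given.
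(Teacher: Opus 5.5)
Your proposal is correct and is essentially the paper's argument. The paper also notes that the first row is unchanged and that $c_5,c_6,c_7$ enter the second and third rows only through multiples of $4$; it absorbs these into new integer parameters ($4u-1=4r-4(\cdots)-1$, $4t-1=4s-4(\cdots)-1$) and then invokes Theorem~\ref{thm5}, whereas you read the two congruences modulo $4$ directly off an arbitrary solution $s_i$, which is the same reduction stated more cleanly.
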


\begin{proof}
According to Theorem \ref{thm9}, if
\[
a_{11}c_1-a_{12}c_2+a_{13}c_3-a_{14}c_4=k^2,
\]
\[
a_{11}c_1-a_{12}c_2+a_{13}c_3=4t-1,
\]
\[
-3a_{11}c_1+a_{13}c_3=4u-1,
\]
and $x>1,y\ge1$ where $\forall(x,y)\in E(\mathbb{Z}^+)$ holds, then there are no positive solutions for the curve $E_{s_1}$. Note $t,u\in\mathbb{Z}$; let
\[
4u-1 = 4r-4a_{25}c_5-4a_{26}c_6-4a_{17}c_7-1,
\]
\[
4t-1 = 4s-4a_{35}c_5-4a_{36}c_6-4a_{37}c_7-1.
\]
Then we have the new equations
\[
a_{11}c_1-a_{12}c_2+a_{13}c_3-a_{14}c_4=k^2,
\]
\[
a_{11}c_1-a_{12}c_2+a_{13}c_3+4a_{35}c_5+4a_{36}c_6+4a_{37}c_7=4s-1,
\]
\[
-3a_{11}c_1+a_{13}c_3+4r+4a_{25}c_5+4a_{26}c_6+4a_{17}c_7=4r-1.
\]
We conclude that this system satisfies the conditions of Theorem \ref{thm5}, so it is possible to construct an elliptic curve with $E_{s_1}(\mathbb{Z}^+)=\emptyset$. Arranging the linear equations as a matrix $S: Ax=b$ where $A\in M_{7\times3}(\mathbb{Z})$:
\[
\begin{pmatrix}
a_{11}&-a_{12}&a_{13}&-a_{14}&0&0&0\\
a_{21}&-a_{22}&a_{23}&0&4a_{25}&4a_{26}&4a_{37}\\
-3a_{31}&0&a_{33}&0&4a_{35}&4a_{36}&4a_{37}
\end{pmatrix}
\begin{pmatrix}c_1\\c_2\\c_3\\c_4\\c_5\\c_6\\c_7\end{pmatrix}
=\begin{pmatrix}k^2\\4u-1\\4t-1\end{pmatrix}.
\]
If the system has a solution $S=\{s_1,\dots,s_N\}\subset\mathbb{Z}^7$, then a curve $E_{s_i}$ can be constructed for $s_i=(c_1,\dots,c_7)$, and if $x>1,y\ge1$ where $\forall(x,y)\in E_{s_i}(\mathbb{Z}^+)$ holds, then
\[
E_{s_i}: Y^2=a_{11}c_1X^3+a_{12}c_2X^2+a_{13}c_3X+a_{14}c_4, \qquad \{E_{s_i}(\mathbb{Z}^+)\}_{i=1}^N=\emptyset. \qedhere
\]
\end{proof}

\begin{theorem}\label{thm11}
Let $S: Ax=b$ be a system of linear equations, where $A\in M_{4\times6}(\mathbb{Z})$. Suppose $S=\{s_1,\dots,s_N\}\subset\mathbb{Z}^4$ is the set of solutions, and let $k,a\ge1$ be positive odd integers such that every prime divisor $p$ of $k,a$ satisfies $p\equiv1\pmod4$, with $u,t\in\mathbb{Z}$. If system $S$ is given by
\[
\begin{pmatrix}
a_{11}&-a_{12}&a_{13}&-a_{14}\\
a_{21}&-a_{22}&a_{23}&0\\
-3a_{31}&0&a_{33}&0\\
a_{41}&-a_{42}&a_{43}&-a_{44}\\
a_{51}&-a_{52}&a_{53}&0\\
-3a_{61}&0&a_{63}&0
\end{pmatrix}
\begin{pmatrix}c_1\\c_2\\c_3\\c_4\end{pmatrix}
=\begin{pmatrix}k^2\\4u-1\\4t-1\\a^2\\4s-1\\4r-1\end{pmatrix}
\]
and the entries satisfy $a_{11}=a_{21}=a_{31}$, $a_{12}=a_{22}$, $a_{13}=a_{23}=a_{33}$, $a_{41}=a_{51}=a_{61}$, $a_{42}=a_{52}$, $a_{43}=a_{53}=a_{63}$, then there exist elliptic curves
\[
E_{s_i}: Y^2=a_{11}c_1X^3+a_{12}c_2X^2+a_{13}c_3X+a_{14}c_4,
\]
\[
C_{s_i}: Y^2=a_{41}c_1X^3+a_{42}c_2X^2+a_{43}c_3X+a_{44}c_4.
\]
If $x>1,y\ge1$ where $\forall(x,y)\in E_{s_i}(\mathbb{Z}^+),C_{s_i}(\mathbb{Z}^+)$ holds, then
\[
\{E_{s_i}(\mathbb{Z}^+)\}_{i=1}^N=\emptyset, \qquad \{C_{s_i}(\mathbb{Z}^+)\}_{i=1}^N=\emptyset.
\]
\end{theorem}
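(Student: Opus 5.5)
The plan is to reduce the statement to two applications of Theorem \ref{thm9}, one for each block of three rows of the $6\times 4$ system. First I split the system $S$ into the upper block, rows $1$--$3$, and the lower block, rows $4$--$6$. Each block has exactly the shape of the system in Theorem \ref{thm9}. Under the hypotheses $a_{11}=a_{21}=a_{31}$, $a_{12}=a_{22}$, $a_{13}=a_{23}=a_{33}$, the upper block reads
\[
a_{11}c_1-a_{12}c_2+a_{13}c_3-a_{14}c_4=k^2,\quad a_{11}c_1-a_{12}c_2+a_{13}c_3=4u-1,\quad -3a_{11}c_1+a_{13}c_3=4t-1 .
\]
Under $a_{41}=a_{51}=a_{61}$, $a_{42}=a_{52}$, $a_{43}=a_{53}=a_{63}$, the lower block is the same with $a_{4j}$ in place of $a_{1j}$ and right-hand side $(a^2,4s-1,4r-1)$.

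The key observation is that any $s_i=(c_1,c_2,c_3,c_4)$ solving the full system solves each block separately. For the upper block, set $A=a_{11}c_1$, $B=a_{12}c_2$, $C=a_{13}c_3$, $D=a_{14}c_4$. The three equations then say exactly $A-B+C-D=k^2$, $A-B+C\equiv -1\pmod 4$ and $-3A+C\equiv-1\pmod4$, with $k$ odd and all its prime divisors $\equiv1\pmod4$. The order in which $4u-1$ and $4t-1$ appear is irrelevant, since only the residues mod $4$ are used. Hence $E_{s_i}$ satisfies the hypotheses of Theorem \ref{thm5}, equivalently of Theorem \ref{thm9}. Together with the standing assumption $x>1$, $y\ge1$ for points of $E_{s_i}(\mathbb{Z}^+)$, this gives $E_{s_i}(\mathbb{Z}^+)=\emptyset$.

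Repeating the argument with $A'=a_{41}c_1$, $B'=a_{42}c_2$, $C'=a_{43}c_3$, $D'=a_{44}c_4$ and with $a$ in place of $k$ gives $C_{s_i}(\mathbb{Z}^+)=\emptyset$. Letting $i$ run over $1,\dots,N$ yields both families of empty sets.

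At the level of this theorem there is no real obstacle. The only points to check are bookkeeping: the block structure must match Theorem \ref{thm9} entry by entry, including the zeros in the last column of rows $2,3,5,6$ and the factor $-3$. One should also say explicitly that a common solution vector may produce two different curves, because the coefficient rows differ. The substantive content is entirely inherited from Theorem \ref{thm5}, hence from Theorem \ref{thm3}. If I had to worry about anything, it would be the step in Theorem \ref{thm3} asserting that every prime divisor of $y^2+\alpha^2$ is $\equiv1\pmod 4$. That assertion needs $\gcd(y,\alpha)=1$ and must also allow a single factor $2$. So I would treat Theorem \ref{thm5} as a black box here, and only flag that the present result is exactly as strong as that theorem.
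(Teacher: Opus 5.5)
Your proposal is correct and is essentially the paper's proof: the paper also just invokes Theorem~\ref{thm9} (hence Theorem~\ref{thm5}) once for the upper block with $k$, giving $E_{s_i}(\mathbb{Z}^+)=\emptyset$, and once for the lower block with $a$, giving $C_{s_i}(\mathbb{Z}^+)=\emptyset$; your explicit identification $A=a_{11}c_1,\dots,D=a_{14}c_4$ spells out what the paper leaves implicit. On your side worry about Theorem~\ref{thm3}, coprimality of $y$ and $\alpha$ is not needed: a prime $q\equiv 3\pmod 4$ dividing $y^2+\alpha^2$ would have to divide $\alpha$ (otherwise $-1$ would be a square mod $q$), and the hypothesis on $\alpha$ rules that out.
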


\begin{proof}
Suppose there are two elliptic curves $E_{s_i},C_{s_i}$:
\[
E_{s_i}: Y^2=a_{11}c_1X^3+a_{12}c_2X^2+a_{13}c_3X+a_{14}c_4, \qquad
C_{s_i}: Y^2=a_{41}c_1X^3+a_{42}c_2X^2+a_{43}c_3X+a_{44}c_4.
\]
By Theorem \ref{thm9}, there is a matrix structure behind these curves as shown above. If the entries satisfy $a_{11}=a_{21}=a_{31}$, $a_{12}=a_{22}$, $a_{13}=a_{23}=a_{33}$, $a_{41}=a_{51}=a_{61}$, $a_{42}=a_{52}$, $a_{43}=a_{53}=a_{63}$, and there is a solution $S=\{s_1,\dots,s_N\}\subset\mathbb{Z}^4$, then there exist $N$ elliptic curves with no positive integer solutions, by Theorem \ref{thm9}: if $x>1,y\ge1$ where $\forall(x,y)\in E_{s_i}(\mathbb{Z}^+),C_{s_i}(\mathbb{Z}^+)$ holds, then
\[
\{E_{s_i}(\mathbb{Z}^+)\}_{i=1}^N=\emptyset, \qquad \{C_{s_i}(\mathbb{Z}^+)\}_{i=1}^N=\emptyset. \qedhere
\]
\end{proof}

\begin{theorem}\label{thm12}
Let $S: Ax=b$ be a system of linear equations, where $A\in M_{4\times6}(\mathbb{Z})$. Suppose $S=\{s_1,\dots,s_N\}\subset\mathbb{Z}^4$ is the set of solutions of the linear system. If system $S$ is given by
\[
\begin{pmatrix}
a_{11}&-a_{12}&a_{13}&-a_{14}\\
a_{21}&-a_{22}&a_{23}&0\\
-3a_{31}&0&a_{33}&0\\
a_{41}&-a_{42}&a_{43}&-a_{44}\\
a_{51}&-a_{52}&a_{53}&0\\
-3a_{61}&0&a_{63}&0
\end{pmatrix}
\begin{pmatrix}c_1\\c_2\\c_3\\c_4\end{pmatrix}
=\begin{pmatrix}1\\-1\\-1\\1\\-1\\-1\end{pmatrix}
\]
and the entries satisfy $a_{11}=a_{21}=a_{31}$, $a_{12}=a_{22}$, $a_{13}=a_{23}=a_{33}$, $a_{41}=a_{51}=a_{61}$, $a_{42}=a_{52}$, $a_{43}=a_{53}=a_{63}$, and $s_i=(c_1,c_2,c_3,c_4)\in S$, then there exist elliptic curves
\[
E_{s_i}: Y^2=a_{11}c_1X^3+a_{12}c_2X^2+a_{13}c_3X+a_{14}c_4,
\]
\[
C_{s_i}: Y^2=a_{41}c_1X^3+a_{42}c_2X^2+a_{43}c_3X+a_{44}c_4.
\]
If $x>1,y\ge1$ where $\forall(x,y)\in E_{s_i}(\mathbb{Z}^+),C_{s_i}(\mathbb{Z}^+)$ holds, then
\[
\{E_{s_i}(\mathbb{Z}^+)\}_{i=1}^N=\emptyset, \qquad \{C_{s_i}(\mathbb{Z}^+)\}_{i=1}^N=\emptyset.
\]
\end{theorem}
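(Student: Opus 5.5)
Theorem \ref{thm12} should be the special case $k=a=1$, $u=t=s=r=0$ of Theorem \ref{thm11}, or equivalently the analogue of Theorem \ref{thm8} for the matrix construction. The plan is to check that the right-hand side $(1,-1,-1,1,-1,-1)^T$ fits the pattern $(k^2,4u-1,4t-1,a^2,4s-1,4r-1)^T$. Then, rather than relying only on the short argument given for Theorem \ref{thm11}, I would apply Theorem \ref{thm5} to each of the two curves separately.

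\textbf{Step 1: the parameters.} Take $k=a=1$ and $u=t=s=r=0$. Then $k^2=a^2=1$ and $4u-1=4t-1=4s-1=4r-1=-1$, which is exactly the given right-hand side. The integer $1$ is positive and odd, and it has no prime divisors. So the requirement that every prime divisor be $\equiv 1\pmod 4$ holds vacuously. This is the same specialization used to deduce Theorem \ref{thm8} from Theorem \ref{thm5}.

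\textbf{Step 2: translating a solution into coefficient conditions.} Fix a solution $s_i=(c_1,c_2,c_3,c_4)\in S$. For $E_{s_i}$, set
$$A=a_{11}c_1,\quad B=a_{12}c_2,\quad C=a_{13}c_3,\quad D=a_{14}c_4.$$
The constraints $a_{11}=a_{21}=a_{31}$, $a_{12}=a_{22}$ and $a_{13}=a_{23}=a_{33}$ turn the first three rows into
$$A-B+C-D=1,\qquad A-B+C=-1,\qquad -3A+C=-1.$$
Hence $-3A+C\equiv-1\pmod4$, $A-B+C\equiv-1\pmod4$ and $A-B+C-D=1^2$. These are precisely the hypotheses of Theorem \ref{thm5} with $k=1$, i.e.\ of Theorem \ref{thm8}.

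Rows $4$–$6$ do the same for $C_{s_i}$. Using $a_{41}=a_{51}=a_{61}$, $a_{42}=a_{52}$ and $a_{43}=a_{53}=a_{63}$, with $A'=a_{41}c_1$ and so on, they give
$$A'-B'+C'-D'=1,\qquad A'-B'+C'=-1,\qquad -3A'+C'=-1.$$

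One point to note is that rows $2$ and $3$ are matched to the congruences in the opposite order from the display in Theorem \ref{thm11}. This causes no problem here, because both entries equal $-1$.

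\textbf{Step 3: conclusion.} Under the standing restriction $x>1$, $y\ge1$ for points of $E_{s_i}(\mathbb{Z}^+)$ and $C_{s_i}(\mathbb{Z}^+)$, Theorem \ref{thm8} (that is, Theorem \ref{thm5} with $k=1$) gives $E_{s_i}(\mathbb{Z}^+)=\emptyset$ and $C_{s_i}(\mathbb{Z}^+)=\emptyset$. Since $s_i\in S$ was arbitrary, this holds for every $i=1,\dots,N$.

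\textbf{Main obstacle.} The only delicate point is the vacuous-divisor case $k=1$ in Theorem \ref{thm3}. There the argument uses that every prime $q\equiv 3\pmod 4$ fails to divide $y^2+1$. I would verify this directly: $q\mid y^2+1$ would force $-1$ to be a quadratic residue mod $q$, which is impossible for $q\equiv 3\pmod 4$.

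I would also remark that Theorems \ref{thm3}–\ref{thm5} never use $A\neq0$. So the conclusion still holds for solutions $s_i$ for which the cubic degenerates, even though such $E_{s_i}$ are not genuinely elliptic.
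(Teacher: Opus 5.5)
Your proposal is correct and is essentially the paper's proof, which consists of setting $k=a=1$ and $u=t=s=r=0$ in Theorem~\ref{thm11}. Applying Theorem~\ref{thm5} with $k=1$ (that is, Theorem~\ref{thm8}) separately to $E_{s_i}$ and $C_{s_i}$ is just that specialization unwound through Theorems~\ref{thm11}, \ref{thm9} and \ref{thm5}, and your check that rows $1$--$3$ and $4$--$6$ give $A-B+C-D=1$, $A-B+C=-1$, $-3A+C=-1$ is exactly what the reduction needs.
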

\begin{proof}
Let $k=a=1$ and $u=t=s=r=0$ in Theorem \ref{thm11}.
\end{proof}

\vspace{1em}
\noindent Faculty of Mathematics Sciences \& Statistics\\
AL-Neelain University, Khartoum, Sudan\\
e-mail: shazlyabdullah3@gmail.com

\end{document}